\title{Hall bases for free Leibniz algebras }
\author{M. Shahryari}
\address{M. Shahryari: Department of Pure Mathematics,  Faculty of Mathematical
Sciences, University of Tabriz, Tabriz, Iran}
\email{mshahryari@tabrizu.ac.ir}
\newcommand{\lp}{\dashv}
\newcommand{\rp}{\vdash}
\newcommand{\M}{\mathrm{M}^{\pm}(X)}
\newcommand{\D}{\mathrm{D}^{\pm}(X)}
\newcommand{\A}{\mathrm{A}^{\pm}(X)}
\newcommand{\Ad}{\mathrm{A}^{\pm}_d(X)}
\newcommand{\Leib}{\mathrm{Leib}(X)}
\newcommand{\pr}{\prime}
\newcommand{\prr}{\prime\prime}
\newtheorem{proposition}{Proposition}
\newtheorem {theorem}{Theorem}
\newtheorem{definition}{Definition}
\begin{document}

\maketitle
\begin{abstract}
The aim of this article is to introduce Hall bases of free Leibniz algebras. We modify the classical notion of Hall bases for free Lie algebras in order to provide the similar construction for the case of Leibniz algebras.
\end{abstract}

{\bf Keywords} Di-algebras; Free Leibniz algebras;  Leibniz polynomials; Hall bases.

\vspace{3cm}

A Leibniz algebra is a vector space $L$ over a field $\mathbb{K}$ with some bilinear product $[- , -]$ which satisfies the Leibniz identity
$$
[[x,y],z]=[[x,z],y]+[x,[y,z]].
$$
One way of obtaining such algebras is to use a di-algebra $D$. This is a vector space equipped with two bilinear associative products $\lp$ and $\rp$,  and the laws
\begin{eqnarray*}
(x\lp y)\lp z&=& x\lp(y\rp z)\\
(x\rp y)\lp z&=& x\rp(y\lp z)\\
(x\lp y)\rp z&=& x\rp(y\rp z)
\end{eqnarray*}
If we define $[x, y]=x\lp y-y\rp x$, then $D$ becomes a Leibniz algebra. Suppose $\Leib$ denotes the free Leibniz algebra over a set $X$. Some combinatorial problems require a linear basis of this algebra. Such  linear bases already constructed in the case of free Lie algebras using the notion of Hall trees. In this article, we modify the classical approach in such a way that a Hall basis for $\Leib$ is obtained. This modification is not trivial, but after giving the suitable definitions of the necessary notions, the rest of the work is in some sense similar to the classical case of Lie algebras (see \cite{Reu} for the classical constructions).

The reader who likes to review  the fundamental notions of Leibniz  and di-algebras   may use the references \cite{Bar} and \cite{Loday}. For a history of several linear bases for associative and non-associative algebras, one may consult \cite{Bokut}.

\section{Signed Hall trees}
Let $X$ be a non-empty set. A signed  tree is defined inductively as follows:\\

1- Every element of $X$ is a signed tree.

2- If $t_1$ and $t_2$ are signed  trees, then $(t_1, t_2)_-$ and $(t_1, t_2)_+$ are also signed  trees.\\

So, every signed  tree is in fact an element of $X$ or a triple consisting two smaller signed  trees and a $\pm$ sign. Elements of $X$ have length one and if $t=(t_1, t_2)_{\pm}$, then $|t|=|t_1|+|t_2|$. We denote $t_1$, the immediate left part of $t$, by $t^{\pr}$ and $t_2$, the immediate right part of $t$, by $t^{\prr}$. Let $\M$ be the set of all signed  trees. A Hall order is a linear ordering $\leq$ on $\M$ such that $t\leq t^{\prr}$, for all $t$. It is easy to see that there are many Hall orders on $\M$. For example, suppose $X=\{ x, y\}$, and consider the following ordering
\begin{align*}
&x > y > (x,x)_+ > (x,x)_- > (x,y)_+ > (x,y)_- > (y,x)_+ > (y,x)_- > (y,y)_+ > \\
&(y,y)_- > (x,(x,x)_+)_+ > (x,(x,x)_+)_- > (x,(x,x)_-)_+ > (x,(x,x)_-)_- >\\
&(x, (x,y)_+)_+ > (x,(x,y)_+)_- > (x,(x,y)_-)_+ > (x,(x,y)_-)_- >\cdots .
\end{align*}
From now on, we assume that $\leq$ is a fixed Hall order given over $\M$. A subset $H\subseteq \M$ is called a Hall set, if it satisfies the following requirements\\

1- Every element of $X$ belongs to $H$.

2- If $t\in \M\setminus X$, then $t\in H$ if and only if, $t^{\pr}, t^{\prr}\in H$, $t^{\pr}\leq t^{\prr}$, and $t^{\pr}\in X$ or $t^{\prr}\leq (t^{\pr})^{\prr}$.\\

It is easy to see that for any fixed Hall order, there is a unique Hall set in $\M$. Again, for example, if we consider the above ordering, then the following set is a Hall set
\begin{eqnarray*}
H&=&\{ x, y, (x,x)_+, (x,x)_-, (y,x)_+, (y,x)_-, (y,y)_+, (y,y)_-, ((x,x)_+,x)_+,\\
&\ &((x,x)_+,x)_-, ((x,x)_-,x)_+, ((x,x)_-)_-, ((x,x)_+,y)_+, \ldots\}.
\end{eqnarray*}
Every element of $H$ will be called a signed Hall tree.  A standard sequence is a tuple $s=(t_1, t_2, \ldots, t_n; j)$, where every $t_i$ is a signed Hall tree, $1\leq j\leq n$ is an integer (which is called the middle of the sequence), and for any $i$, the signed Hall tree $t_i$ belongs to $X$ or otherwise
$$
t_n, \ldots, t_{i+1}\leq t_i^{\prr}.
$$
There are many examples of standard sequences, for example, if every $t_i$ is an element of $X$, or if $t_1\geq t_2\geq \cdots\geq t_n$, then obviously $s$ is a standard sequence. A rise in $s$ is an index $i$ such that $t_i\leq t_{i+1}$. If further we have $t_{i+1}\geq t_{i+2}, \ldots, t_n$, then we say that $i$ is a legal rise.

\begin{definition}
Let $s=(t_1, t_2, \ldots, t_n; j)$ be a standard sequence with a legal rise $i$. The  rewriting of $s$ in the place $i$ is defined as follows.\\

1- If $i\leq j$, then it is the sequence
$$
s^{\pr}=(t_1, \ldots, t_{i-1}, (t_i, t_{i+1})_+, \ldots, t_n; j-1).
$$

2- If $j\leq i$, then it is the sequence
$$
s^{\pr}=(t_1, \ldots, t_{i-1}, (t_i, t_{i+1})_-, \ldots, t_n; j).
$$
\end{definition}
It is easy to check that $s^{\pr}$ is again a standard sequence. Let $s_1$ and $s_2$ be two standard sequences. The notation $s_1\to s_2$ indicates that $s_2$ can be obtained from $s_1$ by a finite number of rewriting operations. The proof of the following proposition is completely similar to the case of classical standard sequences of Hall trees and so we omit the proof. The reader can consult \cite{Reu}, page 86.

\begin{proposition}
Suppose  $s$, $s_1$, and $s_2$ are standard sequences. \\

1- If $s\to s_1$ and $s\to s_2$, then there exists a standard sequence $r$ such that $s_1\to r$ and $s_2\to r$.

2- There is a standard sequence $r$ consisting of elements of $X$, such that $r\to s$.

3- There exists a standard decreasing sequence $r$, such that $s\to r$.
\end{proposition}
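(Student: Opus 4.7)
\medskip

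\textbf{Proof plan.} I would prove all three parts in the spirit of the classical Hall tree argument of \cite{Reu}, tracking the additional signed and middle-index data carefully.

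For part (1), I would invoke Newman's diamond lemma: local confluence together with termination (obtained from part (3)) gives global confluence. Local confluence is checked by considering two legal rises $i < k$ of $s$. If $k \geq i + 2$, the pairs $(t_i, t_{i+1})$ and $(t_k, t_{k+1})$ are disjoint, so the two rewritings commute; one has to verify that the middle index is updated consistently in either order, which splits into the subcases $k \leq j$, $i \leq j < k$, and $j < i$, and in each subcase the resulting $j$ and signs coincide. If $k = i + 1$, then after rewriting at $i$ the index $i+1$ is no longer a legal rise, and one must exhibit a common descendant by further rewriting, using the Hall condition applied to the triple $(t_i, t_{i+1}, t_{i+2})$; again a case split on the position of $j$ with respect to $\{i, i+1\}$ determines the signs, and both paths are shown to converge.

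For part (2), I would induct on $\sum_i |t_i|$. If every $t_i$ lies in $X$, take $r = s$. Otherwise pick any $t_i \notin X$, write $t_i = (t_i^{\pr}, t_i^{\prr})_{\epsilon}$ (both halves lie in $H$ by the definition of a Hall set), and insert them in place of $t_i$, forming
\[
s_1 = (t_1, \ldots, t_{i-1}, t_i^{\pr}, t_i^{\prr}, t_{i+1}, \ldots, t_n;\, j'),
\]
where $j' = j + 1$ if $\epsilon = +$ and $j' = j$ if $\epsilon = -$. The Hall conditions $t_i^{\pr} \leq t_i^{\prr}$ together with $t_i^{\pr}\in X$ or $t_i^{\prr} \leq (t_i^{\pr})^{\prr}$ show both that $s_1$ is standard and that $i$ is a legal rise in $s_1$, and the single rewriting at $i$ reproduces $s$. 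Thus $s_1 \to s$, and by induction there is an $X$-sequence $r$ with $r \to s_1$, hence $r \to s$.

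For part (3), I would use as measure the multiset $\{|t_1|, \ldots, |t_n|\}$ under the well-founded multiset order on positive integers. A rewriting replaces two entries $t_i, t_{i+1}$ by the single tree $(t_i, t_{i+1})_{\pm}$ of length $|t_i|+|t_{i+1}|$, which strictly decreases this multiset. Therefore rewriting terminates, and in an irreducible sequence there is no legal rise; a brief inspection shows any such sequence satisfies $t_1 \geq t_2 \geq \cdots \geq t_n$, i.e.\ is decreasing.

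The main obstacle is the overlap case $k = i+1$ in part (1): one must combine the Hall property with careful bookkeeping of the middle index $j$ and of the signs $\pm$ to guarantee that the two possible first rewritings have a common descendant. This is the one point where the signed, middle-indexed setting departs in a genuinely new way from the classical unsigned argument; once this critical pair is resolved, termination from part (3) closes the argument via Newman's lemma.
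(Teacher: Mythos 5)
The paper offers no proof of this proposition at all --- it defers entirely to \cite{Reu}, p.~86 --- so your proposal is an attempt to supply what the author omits, and its architecture (Newman's lemma plus critical pairs for (1), unfolding for (2), a termination measure for (3)) is the standard and correct skeleton. One small repair first: in part (3) your measure is oriented the wrong way, since replacing the two entries $|t_i|,|t_{i+1}|$ by the single entry $|t_i|+|t_{i+1}|$ \emph{increases} the Dershowitz--Manna multiset order (the new element is larger than both removed ones). Termination is nonetheless immediate because every rewriting shortens the sequence, so $n$ itself is a valid measure; your observation that an irreducible sequence is decreasing is correct, since the last rise of a non-decreasing sequence is automatically legal. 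You should also note that Definition 1 is ambiguous at $i=j$ (both clauses apply and give different outputs), which must be resolved before the local confluence in part (1) can even be formulated; and that in the overlap case $k=i+1$, legality of both rises forces $t_{i+1}=t_{i+2}$, which simplifies that critical pair considerably.

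The substantive gap is in part (2), precisely where the signed, middle-indexed setting genuinely departs from the classical one. Your unfolding assigns the new middle $j'=j+1$ or $j'=j$ according only to the sign $\epsilon$ at the root of $t_i$, independently of where $i$ sits relative to $j$. But the rewriting rules can only create a $+$ node at a position on the $\rp$ side of the middle and a $-$ node on the $\lp$ side, so the sign reassembled at position $i$ is dictated by the position of $i$ relative to the middle, not by the sign you need. Concretely, with the sample order ($x>y$) take $s=(x,(y,x)_+;1)$, which is standard because the condition on the last entry is vacuous. Any letter sequence $r$ with $r\to s$ must satisfy $(r)=(s)=x\lp(y\rp x)=x\lp y\lp x$ (rewriting preserves the associated monomial, by the computation in the proof of Proposition~2), which forces $r=(x,y,x;1)$; but the only legal rise there is $i=2\geq j=1$, whose rewriting produces $(y,x)_-$, never $(y,x)_+$. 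So your induction step does not reproduce $s$, and in fact part (2) as literally stated appears to fail for standard sequences containing a $+$-rooted tree strictly after the middle (or a $-$-rooted one strictly before it). Fixing this requires either restricting the definition of standard sequence so that signs are compatible with positions relative to the middle, or doing the middle-index bookkeeping per position rather than per sign; as written, the plan glosses over the one point it itself identifies as critical.
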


\section{Free Leibniz algebra and Leibniz polynomials}

A di-semigroup is a non-empty set $M$ with two  associative binary operations $\lp$ and $\rp$ satisfying the identities
\begin{eqnarray*}
(x\lp y)\lp z&=& x\lp(y\rp z)\\
(x\rp y)\lp z&=& x\rp(y\lp z)\\
(x\lp y)\rp z&=& x\rp(y\rp z)
\end{eqnarray*}
If $x_1, \ldots, x_n\in M$ are arbitrary elements, then applying any sequence of the operations $\lp$ and $\rp$, we obtain a di-semigroup word on these elements, for example
$$
y=((x_1\lp x_2)\rp (x_3\lp x_4))\lp (x_5\rp x_6)
$$
is a di-semigroup word on elements $x_1, \ldots, x_6$. Any such word can be represented by a rooted planar tree whose nodes are indexed by one of the symbols $\lp$ or $\rp$. In the case of the above word the corresponding tree is the following:\\

\begin{center}
\begin{tikzpicture}
[level distance=10mm,
every node/.style={fill=red!20,circle,inner sep=.5pt},
level 1/.style={sibling distance=20mm},
level 2/.style={sibling distance=10mm},
level 3/.style={sibling distance=5mm}]
\node {$\dashv$}[grow'=up]
child[solid,level distance=10mm] {node {$\vdash$}
child[solid] {node {$\dashv$}
child {node {$x_1$}}
child {node {$x_2$}}
}
child {node {$\dashv$}
child[solid] {node {$x_3$}}
child {node {$x_4$}}
}
}
child {node {$\vdash$}
child {node {$ x_5 $}
}
child {node {$x_6$}}
};
\end{tikzpicture}
\end{center}

If we move from the root toward leafs and in any node follow the directions indicated by the symbols $\lp$ and $\rp$, then we arrive a leaf $x_j$ which is called the middle of $y$. In our example the middle is $x_3$:\\

\begin{center}
\begin{tikzpicture}
[level distance=10mm,
every node/.style={fill=red!20,circle,inner sep=.5pt},
level 1/.style={sibling distance=20mm},
level 2/.style={sibling distance=10mm},
level 3/.style={sibling distance=5mm}]
\node {$\dashv$}[grow'=up]
child[dashed,level distance=10mm] {node {$\vdash$}
child[solid] {node {$\dashv$}
child {node {$x_1$}}
child {node {$x_2$}}
}
child {node {$\dashv$}
child {node {$x_3$}}
child [solid]{node {$x_4$}}
}
}
child {node {$\vdash$}
child {node {$ x_5 $}
}
child {node {$x_6$}}
};
\end{tikzpicture}
\end{center}

It is not hard to see that the laws of di-semigroup implies
$$
y=x_1\rp x_2\rp x_3\lp x_4\lp x_5\lp x_6,
$$
and this expression does not depend on paranthesing. So, any di-semigroup word can be represented as a normal form
$$
x_1\rp \cdots\rp x_j\lp \cdots\lp x_n.
$$
In the case of free di-semigroups, this normal form is also unique.

The free di-semigroup over a set $X$ can be constructed as follows: On the set $\M$ define two operations
$$
t_1\lp t_2=(t_1, t_2)_-, \ \ t_1\rp t_2=(t_1, t_2)_+.
$$
Let $R$ be the ideal generated by all laws defining a di-semigroup. Then
$$
\D=\frac{\M}{R}
$$
is the free di-semigroup on $X$. Every element of this free di-semigroup has a unique representation of the normal form
$$
x_1\rp \cdots\rp x_j\lp \cdots\lp x_n.
$$
We also call elements of $\D$ monomials. Let $\mathbb{K}$ be a field and $\A$ be the vector space with basis $\D$ over $\mathbb{K}$. If we extend the operation $\lp$ and $\rp$ bilinearly, $\A$ becomes the free di-algebra over $X$. This free di-algebra has also the structure of Leibniz algebra, since we can define the Leibniz bracket
$$
[P,Q]=P\lp Q-Q\rp P,
$$
For all $P, Q\in \A$. The free Leibniz algebra over $X$ is the smallest Leibniz subalgebra of $\A$ which includes $X$. We denote it by $\Leib$. The elements of $\Leib$ will be called Leibniz polynomials. For any signed tree $t$, we define a monomial $(t)\in \D$ by induction:\\

1- For any $x\in X$ we have $(x)=x$.

2- If $t=(t_1,t_2)_+$, then $(t)=(t_1)\rp (t_2)$.

3- If $t=(t_1,t_2)_-$, then $(t)=(t_1)\lp (t_2)$.

\begin{proposition}
Every element of $\D$ can be uniquely represented as
$$
(t_1)\rp\cdots\rp (t_j)\lp\cdots \lp (t_n),
$$
for some decreasing sequence of signed Hall trees $t_1\geq t_2\geq \cdots\geq t_n$ and some integer $1\leq j\leq n$.
\end{proposition}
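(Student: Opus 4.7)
The plan is to combine the three parts of Proposition~1 with the uniqueness of the $X$-normal form in $\D$. The key preliminary observation is that to every standard sequence $s=(t_1,\ldots,t_n;j)$ one can associate the monomial $\mathrm{mon}(s)=(t_1)\rp\cdots\rp(t_j)\lp\cdots\lp(t_n)\in\D$, and that a single rewriting step preserves this monomial. Indeed, a rewriting at a legal rise $i$ with $i\leq j$ (resp. $j\leq i$) merges $t_i$ and $t_{i+1}$ using $\rp$ (resp. $\lp$), which is precisely the operation sitting between $(t_i)$ and $(t_{i+1})$ inside $\mathrm{mon}(s)$; associativity of $\rp$ and $\lp$ in $\D$ then yields $\mathrm{mon}(s')=\mathrm{mon}(s)$, while the prescribed update of the middle keeps the rest of the normal form in place.

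For existence I take $m\in\D$ in its unique normal form $m=x_1\rp\cdots\rp x_j\lp\cdots\lp x_n$ and regard $s=(x_1,\ldots,x_n;j)$ as a standard sequence of $X$-elements. Proposition~1(3) furnishes a decreasing standard sequence $r=(u_1,\ldots,u_k;\ell)$ with $s\to r$, and monomial invariance then gives $m=(u_1)\rp\cdots\rp(u_\ell)\lp\cdots\lp(u_k)$, which is the desired form.

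For uniqueness, suppose that $m$ admits two decreasing representations coming from standard sequences $r_1$ and $r_2$. By Proposition~1(2), each $r_i$ is obtained from some $X$-sequence $s_i\to r_i$, and monomial invariance shows that $\mathrm{mon}(s_i)=m$. Since an $X$-sequence is determined by the monomial it represents through the unique $X$-normal form, we must have $s_1=s_2=:s$. Now $s\to r_1$ and $s\to r_2$, so the confluence statement Proposition~1(1) supplies a common $r$ with $r_1\to r$ and $r_2\to r$. Because decreasing sequences of Hall trees admit no further rewriting, both chains are trivial, forcing $r_1=r=r_2$; in particular the middle index and the list of Hall trees coincide.

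The main obstacle I anticipate is the bookkeeping for the invariance observation: one has to check, case by case against the three di-semigroup axioms and the two cases in the definition of rewriting, that merging $t_i$ and $t_{i+1}$ with the prescribed sign together with the update of $j$ produces exactly the same normal-form monomial in $\D$. Once this compatibility is settled, the confluence and termination in Proposition~1 deliver both halves of the statement in essentially the same way as in the classical Hall argument of Reutenauer.
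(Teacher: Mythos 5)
Your proposal is correct and follows essentially the same route as the paper: the same monomial-invariance observation for rewriting steps (via associativity of $\lp$ and $\rp$), existence via Proposition~1(3) applied to the sequence of letters of the normal form, and uniqueness via Proposition~1(2), freeness of the di-semigroup, and confluence from Proposition~1(1) together with the fact that decreasing sequences admit no further rewritings. No substantive difference from the paper's argument.
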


\begin{proof}
The general idea of this proof is the same as \cite{Reu}, but in details it has some differences. For any standard sequence $s=(t_1, \ldots, t_n;j)$, define
$$
(s)=(t_1)\rp\cdots\rp(t_j)\lp\cdots\lp (t_n).
$$
We show that if $s\to s^{\pr}$, then $(s)=(s^{\pr})$. There are two cases: if
$$
s^{\pr}=(t_1, \ldots, (t_i, t_{i+1})_+, \ldots, t_n;j-1),
$$
then we have
\begin{eqnarray*}
(s^{\pr})&=&(t_1)\rp\cdots\rp((t_i)\rp(t_{i+1}))\rp\cdots\rp (t_j)\lp\cdots\lp (t_n)\\
         &=&(s),
\end{eqnarray*}
because of the associativity of $\rp$. If we have
$$
s^{\pr}=(t_1, \ldots, (t_i, t_{i+1})_-, \ldots, t_n;j),
$$
then
\begin{eqnarray*}
(s^{\pr})&=&(t_1)\rp\cdots\rp (t_j)\lp\cdots\lp((t_i)\lp(t_{i+1}))\lp\cdots\lp (t_n)\\
         &=&(s),
\end{eqnarray*}
because of the associativity of $\lp$. Now, suppose $w\in \D$ and $s$ is the standard sequence of letters of $w$ in its normal form. Then $s\to r$, where $r$ is a decreasing sequence of signed Hall trees. So, we have
$$
w=(s)=(r)=(t_1)\rp\cdots\rp (t_j)\lp\cdots \lp (t_n),
$$
For signed Hall trees $t_1\geq t_2\geq \cdots\geq t_n$ and some integer $1\leq j\leq n$. To prove the uniqueness, suppose in the same time we have
$$
w=(u_1)\rp\cdots\rp (u_k)\lp\cdots \lp (u_m),
$$ for a sequence of signed Hall trees $u_1\geq u_2\geq\cdots\geq u_m$. Let $s=(t_1, \ldots, t_n;j)$ and $r=(u_1, \ldots, u_m;k)$. Then clearly, we have $(s)=w=(r)$. By the proposition 1, there are two standard sequences of letters $s^{\pr}$ and $r^{\pr}$, such that $s^{\pr}\to s$ and $r^{\pr}\to r$. So we have $(s^{\pr})=(r^{\pr})$. Since $\D$ is free, we have $s^{\pr}=r^{\pr}$. Again by the proposition 1, there is a standard sequence $p$, such that $s\to p$ and $r\to p$. But $s$ and $r$ are decreasing, so they have no more rewritings. This shows that $s=p=r$.
\end{proof}

Every element of $\D$ of the form $(t)$ will be called a signed Hall word, if $t$ is such a tree. Hence for every signed Hall word $w$, there is exactly one signed Hall tree $t$ such that $w=(t)$. Also, it is now clear that every monomial is a normal product of a decreasing set of signed Hall words, and this representation is unique.\\

Recall that the monomial $(t)$ was defined for any signed Hall tree. Similarly we can define a Leibniz polynomial $[t]$ by induction:\\

1- For any $x\in X$, we define $[x]=x$.

2- If $t=(t_1,t_2)_+$, then $[t]=[[t_1],[t_2]]$.

3- If $t=(t_1,t_2)_-$, then $[t]=[[t_2],[t_1]]$.\\

\begin{theorem}
The set of all expressions of the form
$$
[t_1]\rp\cdots\rp [t_j]\lp\cdots \lp [t_n],
$$
with $n\geq 1$, $t_1\leq t_2\leq \cdots \leq t_n$, and $1\leq j\leq n$, is a basis of $\A$.
\end{theorem}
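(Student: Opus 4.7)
The plan is a Poincar\'e--Birkhoff--Witt style triangularity argument built on Proposition~2. First I would equip the Proposition-2 basis of $\A$—that is, the pairs $(u_1 \geq u_2 \geq \cdots \geq u_n,\, j)$ of decreasing Hall trees together with a middle index—with a linear order $\prec$: within a fixed total length, compare by the decreasing sequence of Hall trees lexicographically (using the fixed Hall order) and break ties on the index $j$. Any element of $\A$, uniquely expressed in the Proposition-2 basis, then acquires a well-defined $\prec$-leading monomial.

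The heart of the proof is the following triangularity lemma, to be proved by induction on $|t|$: for each signed Hall tree $t$, the Leibniz polynomial $[t]\in\A$, when written in the Proposition-2 basis, has $(t)$ (placed in its own Hall normal form) as its unique $\prec$-maximum monomial, with coefficient $\pm 1$. The base case $t\in X$ is immediate. For $t=(t_1,t_2)_+$ one expands
$$
[t] = [t_1]\lp[t_2] - [t_2]\rp[t_1],
$$
substitutes the inductive expansions for $[t_1]$ and $[t_2]$, distributes over $\lp$ and $\rp$, and rewrites each resulting product of Hall words via Proposition~2 into its decreasing Hall normal form. The Hall conditions $t_1\leq t_2$ together with $t_1\in X$ or $t_2\leq t_1^{\prime\prime}$ are precisely what is needed to ensure that exactly one summand contributes $(t)$ while every other summand is strictly $\prec$-smaller. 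The case $t=(t_1,t_2)_-$ is handled symmetrically via $[t] = [t_2]\lp[t_1]-[t_1]\rp[t_2]$.

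Once the single-tree lemma is in hand, the same bilinear expansion upgrades it to the product $[t_1]\rp\cdots\rp[t_j]\lp\cdots\lp[t_n]$ with $t_1\leq\cdots\leq t_n$: its $\prec$-leading monomial is the Proposition-2 basis element indexed by the reversed data $(t_n\geq\cdots\geq t_1,\,n{-}j{+}1)$, with coefficient $\pm 1$. Since the reversal $(t_1\leq\cdots\leq t_n,\,j)\mapsto(t_n\geq\cdots\geq t_1,\,n{-}j{+}1)$ is a bijection between the claimed indexing set and the Proposition-2 indexing set, the transition matrix from the claimed set to the Proposition-2 basis, within each finite-dimensional graded component of $\A$, is triangular with $\pm 1$ on the diagonal. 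The claimed set is therefore a basis of $\A$.

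The main obstacle lies in the single-tree inductive step. One must carefully track the four possible interactions—$[t_1]\lp[t_2]$ versus $[t_2]\rp[t_1]$ for each of $t=(t_1,t_2)_\pm$—and verify that after normalising through Proposition~2 the "wrong" cross terms produced by swapping the order of the factors always land strictly below $(t)$ in $\prec$. The key input is exactly the Hall condition $t_2\leq t_1^{\prime\prime}$: it guarantees that whenever a product $(u)\star(v)$ of Hall words with $u\not\geq v$ must be renormalised, the resulting Hall data is strictly smaller than that attached to $(t)$, so $(t)$ survives unchallenged as the leading monomial.
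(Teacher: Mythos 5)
Your plan is genuinely different from the paper's: the paper proves spanning by the rewriting identities $[s]=[\lambda_i(s)]+[\rho_i(s)]$ (resp.\ $[s]=[\rho_i(s)]-[\lambda_i(s)]$) together with an induction on length plus the number of inversions, and then obtains linear independence not by triangularity but by a cardinality comparison with the Proposition~2 basis inside each graded component (a spanning set in bijection with a basis is a basis). Your route would be fine if the triangularity lemma at its heart were true, but it is false. Take $X=\{x,y\}$ with the paper's Hall order ($x>y$) and the signed Hall tree $t=(y,x)_-$. By definition $(t)=(y)\lp(x)=y\lp x$, whereas $[t]=[[x],[y]]=x\lp y-y\rp x$. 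The only monomials occurring in $[t]$ are $x\lp y$ and $y\rp x$; the monomial $(t)=y\lp x$ does not occur at all, so it cannot be the $\prec$-leading monomial of $[t]$ for any choice of order $\prec$. The same failure occurs for $(y,x)_+$: the polynomial $[(y,x)_+]=y\lp x-x\rp y$ does not contain $((y,x)_+)=y\rp x$. The structural reason is a side/sign mismatch between the two inductive definitions: $(t_1,t_2)_+$ is sent to $(t_1)\rp(t_2)$, but $[[t_1],[t_2]]=[t_1]\lp[t_2]-[t_2]\rp[t_1]$, so the monomials produced from the leading terms of the factors are $(t_1)\lp(t_2)$ and $(t_2)\rp(t_1)$ --- never $(t_1)\rp(t_2)$. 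Since every later step (the extension to products and the ``reversal bijection'' giving a unitriangular transition matrix) rests on this lemma, the argument collapses.

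Nor is the repair a matter of bookkeeping. The computation above suggests replacing the claimed leading term of $[(t_1,t_2)_{\pm}]$ by $((t_1,t_2)_{\mp})$, i.e.\ flipping the sign of the root; but that assignment cannot be made injective, because for $t_1=t_2$ the definitions give $[(t_1,t_1)_+]=[(t_1,t_1)_-]=[[t_1],[t_1]]$, so two distinct indices of your target family correspond to one and the same vector of $\A$ and no choice of leading monomials can make the transition matrix invertible on that pair. If you want a correct proof along the lines actually available, you need the paper's two-step argument: first show that every $[s]$ for a standard sequence $s$ lies in the span of the claimed set, using the identities relating $[s]$, $[\lambda_i(s)]$ and $[\rho_i(s)]$ and inducting on $n$ plus the number of inversions, and then deduce independence from the equicardinality with the monomial basis of Proposition~2 degree by degree, rather than from a leading-term analysis.
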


\begin{proof}
Through the proof, we denote the set of all such polynomials by $B$. Let $s=(t_1, \ldots, t_n;j)$ be a standard sequence. Define
$$
[s]=[t_1]\rp\cdots\rp [t_j]\lp\cdots \lp [t_n].
$$
For any legal rise $i$ in $s$, we define new sequences $\lambda_i(s)$ and $\rho_i(s)$ as follows:\\

1- If $i+1\neq j$, then $\lambda_i(s)=(t_1, \ldots, (t_i, t_{i+1})_+, \ldots, t_n;j-1)$.

2- If $i+1= j$, then $\lambda_i(s)=(t_1, \ldots, (t_i, t_{i+1})_-, \ldots, t_n;j-1)$.

3- $\rho_i(s)=(t_1, \ldots, t_{i+1}, t_i, \ldots, t_n;j)$.\\

It is not hard to check that both $\lambda_i(s)$ and $\rho_i(s)$ are standard sequences. A calculation shows that if $i+1\neq j$, then $ [s]=[\lambda_i(s)]+[\rho_i(s)]$, and if $i+1=j$, then $[s]=[\rho_i(s)]-[\lambda_i(s)]$. So,  we argue with induction on the number
$$
k=n+ the\ number\ of\ inversions.
$$
Note that an inversion in a sequence $s$ is a pair of indices $p$ and $q$ such that $p<q$ and $t_p<t_q$. Now, the length of $\lambda_i(s)$ is $n-1$ and the number of inversions of $\rho_i(s)$ is smaller than those of $s$, therefore $[s]\in \langle B\rangle_{\mathbb{K}}$. Now, consider a monomial
$$
w=x_1\rp\cdots\rp x_j\lp\cdots\lp x_n.
$$
We have
$$
w=[(x_1, \ldots, x_n;j)]\in \langle B\rangle_{\mathbb{K}},
$$
and this shows that $\langle B\rangle_{\mathbb{K}}=\A$. We now show that the set $B$ is linearly independent. Without loos of generality, we can assume that $X$ is finite. Suppose $\Ad$ is the homogenous part of $\A$ consisting of the polynomials of degree $d$. It is clear that $\dim \Ad=|X|^{d+1}$. Since every monomial of degree $d$ can be represented as
$$
(t_1)\rp\cdots\rp (t_j)\lp\cdots \lp (t_n),
$$
for some decreasing sequence of signed Hall trees $t_1\geq t_2\geq \cdots\geq t_n$ and some integer $1\leq j\leq n$, with $\sum |t_i|=d$, so the set
$$
B_0=\{ (t_1)\rp\cdots\rp (t_j)\lp\cdots \lp (t_n): n\geq 1, t_1\geq t_2\geq \cdots\geq t_n, t_i\in H, \sum |t_i|=d\}
$$
is a basis of $\Ad$. Now, we have a correspondence
$$
(t_1)\rp\cdots\rp (t_j)\lp\cdots \lp (t_n)\to [t_1]\rp\cdots\rp [t_j]\lp\cdots \lp [t_n],
$$
between $B_0$ and the set
$$
B_1=\{ [t_1]\rp\cdots\rp [t_j]\lp\cdots \lp [t_n]: n\geq 1, t_1\geq t_2\geq \cdots\geq t_n, t_i\in H, \sum |t_i|=d\}.
$$
As we saw, the later generates $\Ad$, so it is also a basis of $\Ad$. This shows that $B$ is linearly independent.
\end{proof}

\section{Hall basis}
Now, consider that $[H]=\{ [t]: t\in H\}$. We prove the main theorem of this article:

\begin{theorem}
The set $[H]$ is a linear basis of $\Leib$.
\end{theorem}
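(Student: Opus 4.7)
The plan is to deduce linear independence of $[H]$ from Theorem~1 and then to show that $V := \langle [H]\rangle_{\mathbb{K}}$ is a Leibniz subalgebra of $\A$ containing $X$; since $\Leib$ is the smallest such subalgebra of $\A$, this forces $\Leib = V$. Linear independence is immediate because $[H]$ is precisely the ``$n=1$'' slice of the basis $B$ produced in Theorem~1. A straightforward induction on $|t|$ also shows that $[t] \in \Leib$ for every $t \in \M$, since the defining rules for $[t]$ use only the Leibniz bracket; hence $[H] \subseteq \Leib$ and $V \subseteq \Leib$. As $X \subseteq V$, the remaining task is to prove $V$ is closed under $[P,Q] = P \lp Q - Q \rp P$, which by bilinearity reduces to the key lemma: $[[u],[v]] \in V$ for all $u, v \in H$.

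I would prove the key lemma by cases on the Hall-set conditions. Using the identities $[[u],[v]] = [(u,v)_+] = [(v,u)_-]$, the conclusion is immediate whenever either $(u,v)_+ \in H$ or $(v,u)_- \in H$; in particular the case $u = v$ always falls here, since $(u,u)_+ \in H$ by the Hall-order axiom $u \leq u''$ (or because $u \in X$). The remaining situations split into two parallel scenarios: (a) $u < v$ with $u = (u',u'')_{\pm} \notin X$ and $v > u''$, and (b) $u > v$ with $v = (v',v'')_{\pm} \notin X$ and $u > v''$. In scenario (a) I would apply the Leibniz identity $[[[a],[b]],[c]] = [[[a],[c]],[b]] + [[a],[[b],[c]]]$ with $a, b$ the two parts of $u$ and $c = v$, expressing $[[u],[v]]$ as a combination of $[[[u'],[v]],[u'']]$ and $[[u'],[[u''],[v]]]$ (with a sign adjustment when $u = (u',u'')_-$). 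In scenario (b) the rearranged identity $[[a],[[b],[c]]] = [[[a],[b]],[c]] - [[[a],[c]],[b]]$ is used analogously to split $v$. In either scenario the inner brackets that appear on the right have strictly smaller total length than $|u|+|v|$ and therefore lie in $V$ by an outer inductive hypothesis.

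The main obstacle is closing the induction, because substituting those inner expansions back into the outer brackets produces new pairs $(w,u'')$ and $(u',z)$ of the \emph{same} total length $|u|+|v|$, so a naive induction on $|u|+|v|$ alone fails. The remedy is a double induction with a carefully chosen secondary invariant: either a lexicographic invariant such as $(|u|+|v|,\,|u|)$ in scenario (a) and $(|u|+|v|,\,|v|)$ in scenario (b), or, more uniformly, the Hall-order position of the larger argument, after first establishing the auxiliary fact that every Hall tree $h$ occurring in the expansion of $[[a],[b]]$ is bounded above in Hall order by $\max(a,b)$. This whole scheme parallels the classical free Lie algebra Hall basis argument in \cite{Reu}, with the Leibniz identity replacing Jacobi and with the two signed-tree realizations $(u,v)_+$ and $(v,u)_-$ of $[[u],[v]]$ playing the role that antisymmetry plays there. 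Once the key lemma is secured, $V$ is a Leibniz subalgebra containing $X$, whence $\Leib \subseteq V$; combined with the linear independence of $[H]$, this shows that $[H]$ is a basis of $\Leib$.
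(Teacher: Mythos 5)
Your proposal follows essentially the same route as the paper: linear independence is read off from Theorem 1, and the substance is the key lemma that $\langle [H]\rangle_{\mathbb{K}}$ is closed under the bracket, proved by using $[[u],[v]]=[(u,v)_+]=[(v,u)_-]$ when the Hall conditions hold and otherwise splitting the non-letter argument with the (rearranged) Leibniz identity inside a double induction. Of your two candidate secondary invariants, the second is exactly the paper's device --- it inducts lexicographically on $(|t_1|+|t_2|,\max(t_1,t_2))$ with the Hall order in the second coordinate, strengthening the inductive claim to assert that every Hall tree $u_i$ in the expansion satisfies $u_i^{\prime\prime}\leq\max(t_1,t_2)$ (which, via $u_i\leq u_i^{\prime\prime}$, gives your auxiliary bound) --- whereas the purely length-based candidate $(|u|+|v|,|u|)$ does not obviously decrease when the inner expansions are substituted back and should be discarded.
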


\begin{proof}
By Theorem 1, this set is linearly independent. We know that $\Leib$ is the smallest Leibniz subalgebra of $\A$ which contains $X$. We also have
$$
X\subseteq \langle [H]\rangle_{\mathbb{K}}\subseteq \Leib.
$$
So, we prove that $\langle [H]\rangle_{\mathbb{K}}$ is a Leibniz subalgebra. Equivalently, we show that
$$
t_1, t_2\in H\Rightarrow [[t_1],[t_2]]\in \langle [H]\rangle_{\mathbb{K}}.
$$
Again, without loos of generality, we assume that $X$ is finite. Let $\alpha=(|t_1|+|t_2|, \max(t_1, t_2))$. We prove by induction on $\alpha$, that
$$
[[t_1],[t_2]]=\sum \lambda_i [u_i],
$$
for some scalars $\lambda_i$, and signed Hall trees $u_i$, with $u_i^{\prr}< \max(t_1, t_2)$. Note that the ordering of the set of all such $\alpha$'s is lexicographic. If $\alpha=(2, x)$, then $t_1=y\in X$ and $t_2=y\in X$ and $y<x$. Hence $(y,x)_-\in H$ and so
$$
[[t_1],[t_2]]=[x,y]=[(y,x)_-]\in \langle [H]\rangle_{\mathbb{K}}.
$$
Now, suppose for all $x\in X$, we have $\alpha> (2,x)$. There are two cases:\\

Case 1- Assume that $t_1\leq t_2$. We have three subcases:\\

1-1. Let $t_1\in X$. Then $(t_1, t_2)_+\in H$ and hence
$$
[[t_1],[t_2]]=[(t_1,t_2)_+]\in \langle [H]\rangle_{\mathbb{K}}.
$$

1.2. Let $t_1=(t_1^{\pr}, t_2^{\prr})$, with $t_1^{\prr}\geq t_2$. Then $(t_1, t_2)_+\in H$ and hence
$$
[[t_1],[t_2]]=[(t_1, t_2)_+]\in \langle [H]\rangle_{\mathbb{K}}.
$$

1-3. Let $t_1=(t_1^{\pr},t_2^{\prr})$, with $t_1^{\prr}\leq t_2$. Since $t_1\leq t_1^{\prr}$ and $t_1^{\pr}\leq t_1^{\prr}$, so
$$
t_1\leq t_1^{\prr}\leq t_2, \ \ t_1^{\pr}\leq t_1^{\prr}\leq t_2.
$$
By the Leinbiz identity, we have
\begin{eqnarray*}
[(t_1,t_2)_+]&=& [[t_1],[t_2]]\\
             &=& [[[t_1^{\pr}], [t_1^{\prr}]], [t_2]]\\
             &=& [[[t_1^{\pr}],[t_2]],[t_1^{\prr}]]+[[t_1^{\pr}],[[t_1^{\prr}],[t_2]]].
\end{eqnarray*}
We also have
\begin{eqnarray*}
(|t_1^{\pr}|+|t_2|, \max(t_1^{\pr}, t_2))&=&(|t_1^{\pr}|+|t_2|, t_2)\\
                                         &<&(|t_1|+|t_2|, \max(t_1, t_2)),
\end{eqnarray*}
as well as
\begin{eqnarray*}
(|t_1^{\prr}|+|t_2|, \max(t_1^{\prr}, t_2))&=&(|t_1^{\prr}|+|t_2|, t_2)\\
                                         &<&(|t_1|+|t_2|, \max(t_1, t_2)).
\end{eqnarray*}
Hence, by the induction hypothesis, we have
$$
[[t_1^{\pr}],[t_2]]=\sum \lambda_i [u_i],
$$
for some scalars $\lambda_i$, and signed Hall trees $u_i$, with $u_i^{\prr}\leq \max(t_1^{\pr}, t_2)=t_2$. Similarly, we have
$$
[[t_1^{\prr}],[t_2]]=\sum \mu_j [v_j],
$$
for some scalars $\mu_j$, and signed Hall trees $v_j$, with $v_j^{\prr}\leq \max(t_1^{\prr}, t_2)=t_2$. Note that, we also have $|u_i|=|t_1^{\pr}|+|t_2|$, and $|v_j|=|t_1^{\prr}|+|t_2|$. Therefore
$$
[[t_1],[t_2]]=\sum \lambda_i[[u_i],[t_1^{\prr}]]+\sum\mu_j[[t_1^{\pr}], [v_j]],
$$
and the assertion can be now obtained from the induction hypothesis. \\

Case 2- Now assume that $t_1<t_2$. We have
$$
[[t_1],[t_2]]=[(t_2,t_1)_-],
$$
and the assertion follows from the previous case.
\end{proof}

\end{document}